\documentclass[11pt,reqno]{amsart}

\usepackage[T1]{fontenc}
\usepackage[utf8]{inputenc}
\usepackage{amsmath,amssymb,amsthm}
\usepackage[letterpaper,margin=0.75in]{geometry}
\usepackage{graphicx}
\graphicspath{{../images/}}
\usepackage{booktabs}
\usepackage[hidelinks]{hyperref}

\DeclareMathOperator{\vol}{vol}
\DeclareMathOperator{\conv}{conv}

\newcommand{\repobase}{https://github.com/csabamedgyeswork/inscribed-polytopes}
\newcommand{\repocommit}{maxvol-d4-v1}
\newcommand{\repohref}{\href{\repobase}{\texttt{\repobase}}}
\newcommand{\artpath}[1]{\href{\repobase/tree/\repocommit/#1}{\nolinkurl{#1}}}

\newcounter{artifact}
\newenvironment{artifactlist}
  {\sloppy 
   \begin{list}{[R\arabic{artifact}]}
     {\usecounter{artifact}%
      \setlength{\labelwidth}{3.1em}\setlength{\leftmargin}{3.7em}%
      \setlength{\labelsep}{0.6em}\setlength{\itemsep}{0.15ex}%
      \setlength{\parsep}{0pt}}}
  {\end{list}}
\newcommand{\artifact}[3]{\item\label{art:#1}\artpath{#2}, #3}
\newcommand{\artref}[1]{\hyperref[art:#1]{[R\ref*{art:#1}]}}

\newtheorem{theorem}{Theorem}[section]
\newtheorem{proposition}[theorem]{Proposition}
\newtheorem{lemma}[theorem]{Lemma}
\newtheorem{corollary}[theorem]{Corollary}

\theoremstyle{definition}
\newtheorem{definition}[theorem]{Definition}
\newtheorem{remark}[theorem]{Remark}

\newcommand{\citedname}{}
\newtheorem*{citedstatement}{\citedname}
\newenvironment{cited}[2]
  {\renewcommand{\citedname}{#1}\begin{citedstatement}[#2]}
  {\end{citedstatement}}
\title[Inscribed $4$-polytopes with seven and eight vertices]
      {Inscribed $4$-polytopes of maximal volume\\
       with seven and eight vertices}
\author{Csaba Medgyes}
\email{csaba.medgyes.work@gmail.com}

\begin{document}

\begin{abstract}
  We prove, with sum-of-squares certificates checked in exact arithmetic,
  that every cyclic $4$-polytope with seven vertices inscribed in
  $\mathbb{S}^{3}$ has volume at most $2/5$. Hence no $4$-polytope of maximal
  volume with seven vertices inscribed in $\mathbb{S}^{3}$ is cyclic, answering
  a question of Horv\'ath and L\'angi for $d = 4$, and with their theorem the
  maximal volume is $\sqrt{3}/4$. With eight vertices, the same method bounds
  the volume of every simplicial type other than the cross-polytope by
  $13/20$, so the regular cross-polytope, of volume $2/3$, is up to
  orthogonal transformations the unique maximiser.
\end{abstract}

\maketitle

\section{Introduction}
\label{sec:intro}

Which convex polytope with a given number of vertices, inscribed in a sphere,
has the largest volume? Fejes T\'{o}th raised this question in three
dimensions \cite{fejes-toth-1964}, and it remains open there for all but a few
vertex counts. Berman and Hanes found a local necessary condition for
optimality and used it to determine the optimal polyhedra with at most eight
vertices \cite{berman-hanes-1970}; Hoehner and Ledford recently settled the
case of nine vertices in a preprint \cite{hoehner-ledford-2026}. Beyond nine
vertices only numerical results are known: Mutoh reported a computer search
for up to $30$ vertices \cite{mutoh-2003}, and Hardin, Sloane and Smith
tabulate putatively optimal configurations for up to $130$
\cite{hardin-sloane-smith}.

In higher dimensions even less is known. In dimension $d$, Horv\'{a}th and
L\'{a}ngi determined the optimal polytopes with $d+2$ vertices, and, when $d$
is odd, with $d+3$ vertices \cite{horvath-langi-2016}. When $d$ is even, their
argument for $d+3$ vertices covers only polytopes that are not cyclic, and
they asked whether an optimal polytope can be cyclic
\cite[Problem~2]{horvath-langi-2016}; Horv\'{a}th restates the question in
\cite[Problem~2.5]{horvath-2016-survey}.

We answer this question in the first open case, $d=4$, where the polytopes
have seven vertices. We show that every cyclic polytope with seven vertices
inscribed in the unit sphere of $\mathbb{R}^4$ has volume at most $2/5$
(Theorem~\ref{thm:main}). A non-cyclic polytope attains the larger volume
$\sqrt{3}/4$, so no optimal polytope is cyclic (Corollary~\ref{cor:problem2}).
Combined with the result of Horv\'{a}th and L\'{a}ngi on non-cyclic
polytopes, this shows that the maximal volume is $\sqrt{3}/4$
(Corollary~\ref{cor:seven}).

We then settle the next case in four dimensions, eight vertices. Here the
natural candidate is the regular cross-polytope, the four-dimensional analogue
of the regular octahedron, which is optimal among polyhedra with six vertices
\cite{berman-hanes-1970}. Horv\'{a}th and L\'{a}ngi showed that in every
dimension the regular cross-polytope is optimal among polytopes with the
combinatorics of a cross-polytope \cite{horvath-langi-2016}, but this leaves
open whether a polytope of another combinatorial type does better. Since an
optimal polytope has only simplices as facets \cite{horvath-langi-2016}, it
suffices to consider simplicial polytopes. There are $37$ combinatorial types
of simplicial polytopes with eight vertices in four dimensions
\cite{grunbaum-sreedharan-1967}, and Firsching realised each of them with
rational vertices on the sphere \cite{firsching-2017}. We show that each of
the $36$ types other than the cross-polytope has volume at most $13/20$
(Theorem~\ref{thm:eight}), which is less than the volume $2/3$ of the regular
cross-polytope. Hence the regular cross-polytope is the optimal polytope with
eight vertices (Corollary~\ref{cor:eight}).

Both results rest on the same method. For a fixed combinatorial type we write
the volume as a polynomial in the coordinates of the vertices and bound it by
a sum-of-squares certificate \cite{parrilo-2003}, an algebraic identity that a
computer checks in exact rational arithmetic. The bound holds for every
configuration of points on the sphere, whether or not the points are in convex
position, so convexity never has to be verified.

Section~\ref{sec:prelim} collects the background, Section~\ref{sec:method}
describes the certificates and their verification, and
Sections~\ref{sec:results} and~\ref{sec:eight} prove the results for seven and
eight vertices.

\section{Preliminaries}
\label{sec:prelim}

\begin{cited}{Notation}{{\cite[Section~1]{horvath-langi-2016}}}
  Throughout the paper, by a polytope we mean a convex polytope. The vertex set
  of a polytope $P$ is denoted by $V(P)$. We denote the family of
  $d$-dimensional polytopes, with $n$ vertices and inscribed in the unit sphere
  $\mathbb{S}^{d-1}$, by $\mathcal{P}_{d}(n)$. We denote $d$-dimensional
  volume by $\vol_{d}$, and set
  $v_{d}(n) = \max\{\vol_{d}(P) : P \in \mathcal{P}_{d}(n)\}$. Note that by
  compactness, $v_{d}(n)$ exists for any value of $d$ and $n$.
\end{cited}

\begin{remark}
  \label{rem:attained}
  In more detail, for $n \geq d+1$: the volume of $\conv\{p_{1}, \ldots, p_{n}\}$
  is continuous on the compact set $(\mathbb{S}^{d-1})^{n}$, so it attains its
  maximum at some $p$, and this maximum is positive. Every point of
  $\mathbb{S}^{d-1}$ other than the $p_{i}$ lies outside their convex hull,
  so if two $p_{i}$ coincided, replacing one of them by such a point would
  strictly increase the volume. Hence the $p_{i}$ are distinct, they are the
  vertices of $\conv\{p_{1}, \ldots, p_{n}\} \in \mathcal{P}_{d}(n)$, and its
  volume is $v_{d}(n)$.
\end{remark}

\begin{definition}[{\cite[Lecture~2]{ziegler-1995}}]
  A \emph{facet} of a $d$-polytope is a face of dimension $d-1$, and a
  \emph{ridge} is a face of dimension $d-2$. A polytope is \emph{simplicial} if
  every facet is a simplex. Two simplicial polytopes are
  \emph{combinatorially equivalent} if a bijection between their vertex sets
  carries the vertex sets of the facets of one onto those of the other; an
  equivalence class is a \emph{combinatorial type}.
\end{definition}

\subsection{Property Z}

\begin{definition}[{\cite[Definition~1]{horvath-langi-2016}}]
  \label{def:propertyz}
  Let $P \in \mathcal{P}_{d}(n)$ be a $d$-polytope with
  $V(P) = \{p_{1}, p_{2}, \ldots, p_{n}\}$. If for each $i$, there is an open
  set $U_{i} \subset \mathbb{S}^{d-1}$ such that $p_{i} \in U_{i}$, and for any
  $q \in U_{i}$, we have
  \begin{equation*}
    \vol_{d}\bigl(\conv((V(P) \setminus \{p_{i}\}) \cup \{q\})\bigr)
      \leq \vol_{d}(P),
  \end{equation*}
  then we say that $P$ satisfies Property Z.
\end{definition}

\begin{remark}
  \label{rem:propertyz}
  Property Z is a local form of maximality: moving a single vertex slightly on
  the sphere does not increase the volume. Berman and Hanes
  \cite{berman-hanes-1970} introduced it for $d = 3$ and used, as we do,
  that every $P \in \mathcal{P}_{d}(n)$ with $\vol_{d}(P) = v_{d}(n)$
  satisfies Property Z: for $q$ outside $V(P) \setminus \{p_{i}\}$ the new hull has $n$
  points of $\mathbb{S}^{d-1}$ as vertices, so it lies in $\mathcal{P}_{d}(n)$
  or has dimension less than $d$.
\end{remark}

\begin{lemma}[{\cite[Lemma~1]{horvath-langi-2016}}]
  \label{lem:simplicial}
  Every $P \in \mathcal{P}_{d}(n)$ satisfying Property Z is simplicial.
\end{lemma}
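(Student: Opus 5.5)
We argue by contraposition: we suppose $P \in \mathcal{P}_{d}(n)$ has a facet $F$ that is not a simplex, and we find a vertex $p_i$ of $F$ and points $q \in \mathbb{S}^{d-1}$ arbitrarily close to $p_i$ with $\vol_d\bigl(\conv((V(P)\setminus\{p_i\})\cup\{q\})\bigr) > \vol_d(P)$. This violates Property Z.

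\emph{Setup.} Since $F$ is a $(d-1)$-polytope inscribed in the $(d-2)$-sphere $S_F = \mathbb{S}^{d-1}\cap H$, where $H$ is the affine hull of $F$, and $F$ is not a simplex, $F$ has at least $d+1$ vertices. Let $u$ be the outer unit normal of $F$, so that $P$ lies in $\{x : \langle x,u\rangle \le h\}$, where $H=\{\langle x,u\rangle = h\}$. We choose a vertex $p_i$ of $F$. The tangent space of $\mathbb{S}^{d-1}$ at $p_i$ contains the direction $w = u - \langle u,p_i\rangle p_i$. This vector is nonzero, since $u \neq \pm p_i$ because $F$ has more than one point. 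It satisfies $\langle w,u\rangle = 1-\langle u,p_i\rangle^2 > 0$. Hence moving $p_i$ along the sphere to $q_t = \cos(t|w|)\,p_i + \sin(t|w|)\,w/|w|$ for small $t>0$ pushes it strictly out of the half-space, i.e.\ beyond $F$, at first order in $t$.

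\emph{Volume gain versus loss.} Write $P' = \conv(V(P)\setminus\{p_i\})$ and $P_t = \conv(P'\cup\{q_t\})$. Then $P_t \supseteq P'$, and $\vol(P_t) - \vol(P)$ equals the volume gained beyond $P'$ minus the volume of $P\setminus P'$ that is lost. The key point is that the loss is small. Since $F$ is not a simplex, $F' = \conv(V(F)\setminus\{p_i\})$ is still $(d-1)$-dimensional: $F$ has at least $d+1$ vertices in general position on the sphere $S_F$, so any $d$ of them span $H$. Thus $P\setminus P'$ is contained in the union of the cones from $p_i$ over the facets of $P'$ that $p_i$ sees, and these cones lie close to $H$ on the $F$ side. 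Meanwhile $P_t$ contains the pyramid $\conv(F'\cup\{q_t\})$, whose volume is $\tfrac1d \vol_{d-1}(F')\cdot \mathrm{dist}(q_t,H) = c\,t + O(t^2)$ with $c>0$. So we need to compare this linear gain with $\vol(P\setminus P')$, which is a fixed positive amount independent of $t$. That comparison fails as stated, so the argument must be refined.

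\emph{Refinement: the main obstacle.} The loss does not vanish with $t$, so removing $p_i$ entirely is too crude. Instead we compare $P_t$ with $P$ directly. For small $t$ the point $q_t$ is near $p_i$. Because $p_i$ is an extreme point and the other vertices are fixed, the region $P\setminus P_t$ lies in a neighbourhood of $p_i$ of size $O(t)$. Concretely, $P \subseteq \conv(P_t \cup \{p_i\})$, and $\conv(P_t\cup\{p_i\})\setminus P_t$ is a union of pyramids with apex $p_i$ over facets of $P_t$ visible from $p_i$. Each such pyramid has height $O(t)$, and the relevant parts of its base near $q_t$ have $(d-1)$-volume $O(t)$. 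Since $d\ge 2$, the loss is therefore $O(t^2)$. Meanwhile $P_t$ contains $\conv(F'\cup\{q_t\})$, which gives a gain of order $t$ outside $P$, the part lying beyond $H$. Establishing the $O(t^2)$ loss bound carefully is the step I expect to be hardest. I would handle it by noting that $p_i$ lies in $\conv(F'\cup\{q_t\})\cup P_t$ up to a set contained in a ball of radius $O(t)$ around $p_i$ intersected with a wedge of angular width $O(t)$. This uses that $p_i$ is a simple-enough vertex after generic choice of $p_i$ among the at least $d+1$ vertices of $F$.

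Combining the two estimates, $\vol(P_t) - \vol(P) \ge ct - O(t^2) > 0$ for small $t$. Hence Property Z fails at $p_i$, and $P$ must be simplicial.
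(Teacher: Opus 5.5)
The paper does not prove this lemma; it quotes it from \cite{horvath-langi-2016}. Judged on its own, your proposal has a genuine gap: the claim that the loss is $O(t^{2})$ is false. The pyramids with apex $p_i$ over the facets of $P_t$ visible from $p_i$ do have height $O(t)$. But their bases are whole facets of $P_t$ through $q_t$, whose $(d-1)$-volume does not shrink with $t$. So the loss is of order $t$, the same order as the gain from $\conv(F'\cup\{q_t\})$, and the sign of the first-order change in your outward direction $w=u-\langle u,p_i\rangle p_i$ is not determined.

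It can be negative. Take $d=3$ and let $P$ be the convex hull of the south pole $S$ and the square with vertices $(\pm r,0,h)$, $(0,\pm r,h)$, where $r^{2}+h^{2}=1$. Move $p_i=(r,0,h)$. Then $P'=\conv\{(0,r,h),(-r,0,h),(0,-r,h),S\}$. Near $p_i$, the only facets of $P'$ a point can see are $F'$ (in the plane $z=h$, area $r^{2}$) and the triangle in the plane $x=0$ (area $r(1+h)$). One finds
\[
  \vol_3(P_t)-\vol_3(P)=\tfrac{t r^{2}}{3}(1-2h)(1+h)+O(t^{2}),
\]
which is negative for $h>1/2$. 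A smaller error: points on a sphere need not be in general position. For example, a facet of a $4$-polytope can be an inscribed pyramid over a square, and deleting its apex leaves a $2$-dimensional set. The fix is to choose $p_i$ outside some $d$ affinely independent vertices of $F$.

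The missing idea is to use both directions $\pm w$. For $x\in\mathbb{R}^{d}$,
\[
  f(x)=\vol_d\bigl(\conv(P'\cup\{x\})\bigr)=\vol_d(P')+\sum_{G}\tfrac1d\vol_{d-1}(G)\max\bigl(0,\langle x,u_G\rangle-h_G\bigr),
\]
where the sum runs over the facets $G$ of $P'$, with outer normal $u_G$ and support value $h_G$. Thus $f$ is convex and Lipschitz, and $f(p_i)=\vol_d(P)$. Since $p_i\in H$, the term for $G=F'$ is $c\max(0,\langle x-p_i,u\rangle)$ with $c=\vol_{d-1}(F')/d>0$. Convexity of the remaining terms gives
\[
  f(p_i+tw)+f(p_i-tw)\ge 2\vol_d(P)+ct\langle w,u\rangle .
\]
The points $q_{\pm t}$ on the sphere lie within $O(t^{2})$ of $p_i\pm tw$, and $f$ is Lipschitz. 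Hence one of $f(q_t)$, $f(q_{-t})$ exceeds $\vol_d(P)$ for every small $t>0$, and Property Z fails at $p_i$. The kink along $H$ shows only that $w$ and $-w$ cannot both be non-increasing. In the example above it is $-w$, which moves $p_i$ below $H$, that increases the volume.
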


\subsection{Polytopes with \texorpdfstring{$d+3$}{d+3} vertices}

\begin{theorem}[{\cite[Theorem~3]{horvath-langi-2016}}]
  \label{thm:hl}
  Let $P \in \mathcal{P}_{d}(d+3)$ satisfy Property Z. If $d$ is even, assume
  that $P$ is not cyclic. Then $P = \conv\{P_{1} \cup P_{2} \cup P_{3}\}$,
  where $P_{1}$, $P_{2}$ and $P_{3}$ are regular simplices inscribed in
  $\mathbb{S}^{d-1}$ and contained in three mutually orthogonal linear
  subspaces of $\mathbb{R}^{d}$. Furthermore:
  \begin{itemize}
    \item If $d$ is odd and $P$ has maximal volume over
      $\mathcal{P}_{d}(d+3)$, then the dimensions of $P_{1}$, $P_{2}$ and
      $P_{3}$ are $\lfloor d/3 \rfloor$ or $\lceil d/3 \rceil$. In particular,
      in this case we have
      \begin{equation*}
        (v_{d}(d+3) =) \vol_{d}(P) = \frac{1}{d!} \cdot \prod_{i=1}^{3}
          (k_{i}+1)^{\frac{k_{i}+1}{2}} k_{i}^{-\frac{k_{i}}{2}},
      \end{equation*}
      where $k_{1} + k_{2} + k_{3} = d$ and for every $i$, we have
      $k_{i} \in \{\lfloor \frac{d}{3} \rfloor, \lceil \frac{d}{3} \rceil\}$.
    \item The same holds for the dimensions of $P_{1}$, $P_{2}$ and $P_{3}$, if
      $d$ is even, and $P$ has maximal volume over the family of not cyclic
      elements of $\mathcal{P}_{d}(d+3)$, satisfying Property Z.
  \end{itemize}
\end{theorem}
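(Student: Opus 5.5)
The plan is to reconstruct the argument of Horv\'ath and L\'angi through the Gale diagram of a simplicial $d$-polytope with $d+3$ vertices. By Lemma~\ref{lem:simplicial}, $P$ is simplicial. Its Gale transform is a configuration of $d+3$ vectors in $\mathbb{R}^{2}$, equivalently a standard Gale diagram: points on the unit circle with multiplicities, so that every open half-plane contains at least two of them. Combinatorial types of simplicial polytopes with $d+3$ vertices correspond to an odd number $2m+1$ of diameter classes, arranged alternately. The cyclic polytope is the case in which every class contains a single point. The first step is to show that Property Z rules out every arrangement other than the one with exactly three diameter classes. For odd $d$ all arrangements are in play. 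For even $d$ the cyclic case is excluded by hypothesis, and the remaining cases are handled the same way.

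The second step is to derive a local optimality condition from Property Z. Fix all vertices but $p_i$. The volume of a simplicial polytope is a sum of cone volumes $\frac{1}{d}\,\mathrm{dist}\cdot\vol_{d-1}$ over the facets, or equivalently a signed sum of simplex determinants from a triangulation. Varying $p_i$ on the sphere, the volume is locally the linear function $q\mapsto \langle q, N_i\rangle + c$, where $N_i$ is the sum of the outer normal vectors, weighted by area, of the facets of the vertex figure (the link) of $p_i$. Property Z then forces $p_i$ to be a critical point of this linear function on $\mathbb{S}^{d-1}$, so $p_i$ is parallel to $N_i$. This is the higher-dimensional analogue of the Berman--Hanes condition. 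Next we combine it with the structure from the Gale diagram. With three diameter classes, $P$ is a join-like sum: the vertex set splits into three groups $V_1,V_2,V_3$, and $P$ is combinatorially the free sum of three simplices $\Delta_{k_1}\oplus\Delta_{k_2}\oplus\Delta_{k_3}$. The facets are exactly the sets that omit one vertex from each group. For such a $p_i$, the criticality condition says that $p_i$ is the normalized sum of certain determinant gradients. Exploiting the symmetry within each group, we show that the affine hulls of the groups pass through the origin and are mutually orthogonal. We then show that each group is a regular simplex centred at the origin. The idea is to write the volume as a product $\frac{1}{d!}\prod_j (k_j!\,\vol_{k_j}(\conv V_j))$ times a factor for the mutual position, which is maximised by orthogonality (a Hadamard-type inequality), and then to use that an inscribed simplex of maximal volume is regular. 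For the more delicate claim, that Property Z alone forces orthogonality and regularity, we would use the criticality equations directly: they give that $p_i$ is orthogonal to the spans of the other groups.

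The third step is the optimisation. Given the structure, $\vol_d(P)=\frac{1}{d!}\prod_i (k_i+1)^{(k_i+1)/2}k_i^{-k_i/2}$, using the volume of a regular $k$-simplex inscribed in the unit sphere. We maximise this over $k_1+k_2+k_3=d$. The function $f(k)=\frac{k+1}{2}\log(k+1)-\frac{k}{2}\log k$ is concave, so a balancing argument puts the optimum at $k_i\in\{\lfloor d/3\rfloor,\lceil d/3\rceil\}$.

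The main obstacle is the first step: excluding Gale diagrams with five or more diameter classes using only Property Z. What we would try first is a perturbation. In such a configuration some vertex $p_i$ has a link whose normal sum $N_i$ cannot be parallel to $p_i$ simultaneously for all vertices. To see this, we would sum the criticality relations around the alternating classes and derive a contradiction with the cyclic ordering. For even $d$ with all classes singletons, this summation argument degenerates, which is exactly why the cyclic case remains open in the theorem.
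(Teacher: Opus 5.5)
The paper gives no proof of this statement. It quotes it from Horv\'ath and L\'angi \cite{horvath-langi-2016} and uses it as a black box, so your proposal has to stand on its own, and it has a genuine gap exactly at the step you call the main obstacle. Apart from the three-class case, the whole content of the first assertion is that Property~Z rules out every Gale diagram with $2m+1\ge 5$ classes except the even-dimensional cyclic one. For this you only announce an intention to ``sum the criticality relations around the alternating classes''. For simplicial $P$, Property~Z amounts to the $d+3$ vector equations $p_i=\nabla_{p_i}D/\lvert\nabla_{p_i}D\rvert$, with $D$ signed so that $D(p)>0$, because an affine function has a single local maximum on $\mathbb{S}^{d-1}$. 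You never say which combination of these equations is contradictory, for which diagrams, or why.

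Your account of the exceptional case is also inaccurate. For odd $d$ there are an even number $d+3$ of Gale points in an odd number of classes, so the cyclic polytope is not the all-singleton case; for instance $C_{3}(6)$ has multiplicities $2,1,1,1,1$, and it must itself be excluded. What the hypothesis ``$d$ odd, or $P$ not cyclic'' provides is precisely a class with at least two points. Equivalently, there are two twin vertices whose transposition is a combinatorial automorphism, and $P$ is a one-point suspension. For even $d$ the only all-singleton diagram is the regular $(d+3)$-gon, which is the cyclic type. A proof must therefore use such a twin pair together with the presence of at least five classes. The proposal contains no such argument, and the claim that a summation ``degenerates'' in the cyclic case is a guess, not a step.

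Your second step, as written, also proves less than you need. The Hadamard-type bound only identifies the \emph{global} maximiser of the three-class type, whereas the first assertion concerns every $P$ with Property~Z. This part can be completed as follows.
\begin{itemize}
  \item With $V_i=\{y^i_0,\dots,y^i_{k_i}\}$, the coherent orientation of the join $\partial\Delta_{k_1}*\partial\Delta_{k_2}*\partial\Delta_{k_3}$ gives $D=\pm\,\omega_1\wedge\omega_2\wedge\omega_3$, where $\omega_i=\bigwedge_{j=1}^{k_i}(y^i_j-y^i_0)$.
  \item Hence a vertex of $V_1$ enters $D$ only through its orthogonal projection onto the $k_1$-dimensional space $L_1$ orthogonal to the directions of $V_2$ and $V_3$. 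Moreover, $\lvert D\rvert$ is a positive constant times the distance of that projection from the affine hyperplane of $L_1$ through the projections of the other vertices of $V_1$.
  \item Property~Z makes each vertex maximise this quantity over all of $\mathbb{S}^{d-1}$. So every vertex of $V_1$ lies in $L_1$ and is perpendicular to the opposite facet of $\conv V_1$.
  \item Then all inner products within $V_1$ are equal, hence equal to $-1/k_1$, so $V_1$ is a regular simplex spanning $L_1$.
  \item The same argument for $V_2$ and $V_3$ gives the mutual orthogonality.
\end{itemize}
Your third step, the concavity and balancing argument, is correct.
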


\begin{remark}[{\cite[Remark~8]{horvath-langi-2016}}]
  \label{rem:dim4}
  For $d = 4$ the dimensions are $1$, $1$ and $2$, and the formula gives the
  volume
  ${\frac{1}{24} \cdot 2 \cdot 2 \cdot \frac{3\sqrt{3}}{2} = \frac{\sqrt{3}}{4}}$
  \artref{testprelim}. It is attained by the convex hull of a regular triangle
  and two diameters of $\mathbb{S}^{3}$ in mutually orthogonal linear
  subspaces.
\end{remark}

\subsection{Cyclic polytopes}

\begin{definition}[{\cite[Section~4]{horvath-langi-2016}}]
  A $d$-polytope with $n$ vertices is cyclic, if it is combinatorially
  equivalent to the convex hull of $n$ points on the moment curve
  $\gamma(t) = (t, t^{2}, \ldots, t^{d})$, $t \in \mathbb{R}$.
\end{definition}

\begin{theorem}[{Gale's evenness condition \cite{gale-1963},
    \cite[Theorem~0.7]{ziegler-1995}}]
  \label{thm:gale}
  Let $n > d \geq 2$ and $t_{1} < \cdots < t_{n}$. Then
  $\conv\{\gamma(t_{1}), \ldots, \gamma(t_{n})\}$ is a simplicial $d$-polytope,
  and a $d$-subset $S \subseteq \{1, \ldots, n\}$ indexes the vertices of a facet
  if and only if, for all $i < j$ not in $S$, the number of $k \in S$ with
  $i < k < j$ is even.
\end{theorem}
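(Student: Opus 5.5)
The plan is to follow the standard proof of Gale's evenness condition, as in \cite[Theorem~0.7]{ziegler-1995}. The key tool is the polynomial obtained from the hyperplane through the points of $S$. For a $d$-subset $S = \{s_{1}, \ldots, s_{d}\}$ with $t_{s_{1}} < \cdots < t_{s_{d}}$, define $F_{S}\colon \mathbb{R}^{d} \to \mathbb{R}$ by
\begin{equation*}
  F_{S}(x) = \det\begin{pmatrix} 1 & 1 & \cdots & 1 \\ x & \gamma(t_{s_{1}}) & \cdots & \gamma(t_{s_{d}}) \end{pmatrix}.
\end{equation*}
This is an affine function of $x$, and it is not constant: substituting $x = \gamma(t)$ gives a Vandermonde determinant in $t, t_{s_{1}}, \ldots, t_{s_{d}}$. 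Hence $\{F_{S} = 0\}$ is an affine hyperplane $H_{S}$ through the $d$ points $\gamma(t_{s_{k}})$. The crucial computation is the factorisation
\begin{equation*}
  F_{S}(\gamma(t)) = c_{S} \prod_{k=1}^{d} (t_{s_{k}} - t),
\end{equation*}
where $c_{S} = \prod_{k<l}(t_{s_{l}} - t_{s_{k}}) > 0$, which follows from the Vandermonde formula.

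First I would settle that the polytope is simplicial and $d$-dimensional. Any $d+1$ of the points $\gamma(t_{i})$ are affinely independent, again because their Vandermonde determinant is nonzero. Since $n > d$, the convex hull $P$ is therefore full-dimensional. Moreover, a polynomial of degree $d$ in $t$ has at most $d$ roots, so every hyperplane meets the moment curve in at most $d$ points. Consequently every facet contains at most $d$, hence exactly $d$, of the vertices, and these are affinely independent, so every facet is a simplex. Also every $\gamma(t_{i})$ is a vertex. If one were a convex combination of the others, it would lie in a facet together with at least $d$ other points, and that facet would then contain $d+1$ of the points on a common hyperplane, which is impossible.

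Next, $S$ indexes a facet if and only if $H_{S}$ is a supporting hyperplane, that is, $F_{S}(\gamma(t_{i}))$ has the same nonzero sign for all $i \notin S$. By the factorisation, the sign of $F_{S}(\gamma(t_{i}))$ for $i \notin S$ is $(-1)^{m(i)}$, where $m(i)$ is the number of $k \in S$ with $k < i$; this uses that the $t$'s are increasing. For $i < j$ both outside $S$, the parity of $m(j) - m(i)$ is the parity of the number of elements of $S$ strictly between $i$ and $j$. Thus all the signs agree exactly when the evenness condition holds. Conversely, if $S$ indexes a facet, its hyperplane must be $H_{S}$, since $d$ affinely independent points span a unique hyperplane, and so the condition holds. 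If the condition holds, $H_{S}$ supports $P$, and $H_{S} \cap P = \conv\{\gamma(t_{s}) : s \in S\}$ is $(d-1)$-dimensional, so it is a facet.

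The main obstacle is the sign computation, in particular getting the product formula for $F_{S}(\gamma(t))$ with the correct positive constant. The remaining steps are routine once that formula is in hand.
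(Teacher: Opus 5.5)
Your argument is the standard proof of \cite[Theorem~0.7]{ziegler-1995}. The paper gives no proof of its own and simply cites this result, so your route is the expected one. The following steps are all correct: the Vandermonde factorisation with $c_{S} > 0$, the sign $(-1)^{m(i)}$, the parity comparison, both directions of the facet characterisation, full-dimensionality, and simpliciality. None of these steps needs to know in advance which $\gamma(t_{i})$ are vertices.

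The step that fails is the claim that every $\gamma(t_{i})$ is a vertex. You say that a non-vertex $\gamma(t_{i})$ ``would lie in a facet''. That holds only if it lies on the boundary of $P$. A non-vertex can be an interior point of $P$, and then it lies in no facet.

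Your argument never uses the hypothesis $d \geq 2$, and for $d = 1$ its conclusion is false. With $n = 3$ the polytope is the segment $[t_{1}, t_{3}]$, and $\gamma(t_{2}) = t_{2}$ is an interior point, yet all your earlier steps go through verbatim.

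The repair must use $d \geq 2$. For instance, take the affine function $\ell_{i}(x) = x_{2} - 2t_{i}x_{1} + t_{i}^{2}$, which satisfies $\ell_{i}(\gamma(t)) = (t - t_{i})^{2}$. Then $\{\ell_{i} = 0\}$ supports $P$ and meets it only in $\gamma(t_{i})$, which is therefore a vertex. Alternatively, exhibit for each $i$ a $d$-set $S \ni i$ satisfying the evenness condition, such as a union of disjoint pairs of consecutive indices, together with $1$ or $n$ when $d$ is odd. Then apply the direction you already proved: $\gamma(t_{i})$ is a vertex of a facet, hence of $P$.

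The statement as printed does not say explicitly that the polytope has $n$ vertices. The paper does use it, though, when it labels the seven vertices of $C_{4}(7)$ along the curve in Remark~\ref{rem:cyclic}, so this step should be repaired.
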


\begin{remark}
  \label{rem:cyclic}
  By Theorem~\ref{thm:gale} the facets depend only on the order of the points
  along the curve, so all cyclic $4$-polytopes with $7$ vertices have one
  combinatorial type \cite[Example~0.6]{ziegler-1995}, which we denote
  $C_{4}(7)$ (in \cite{horvath-langi-2016} $C_{4}(7)$ denotes a particular
  polytope of this type).
  Labelling the vertices $1, \ldots, 7$ along the curve, its $14$ facets
  are~\artref{testprelim}
  \[
    1234,\; 1237,\; 1245,\; 1256,\; 1267,\; 1347,\; 1457,\;
    1567,\; 2345,\; 2356,\; 2367,\; 3456,\; 3467,\; 4567.
  \]
  Of the five combinatorial types of simplicial $4$-polytopes with $7$
  vertices in Firsching's enumeration \cite{firsching-2017}, $C_{4}(7)$ is the
  first, labelled \texttt{firsching-7-0} in our code; the other four are not
  cyclic~\artref{testresults}, \artref{testprelim}.
\end{remark}

For even $d$, Theorem~\ref{thm:hl} leaves the cyclic polytopes open:

\begin{cited}{Problem 2}{{\cite[Section~5]{horvath-langi-2016}}}
  Is it true that if $P \in \mathcal{P}_{d}(d+3)$, where $d$ is even, has
  volume $v_{d}(d+3)$, then $P$ is not cyclic?
\end{cited}

We answer Problem~2 for $d = 4$. By Remark~\ref{rem:cyclic}, every cyclic
$P \in \mathcal{P}_{4}(7)$ has type $C_{4}(7)$, so it suffices to show that
every such polytope has volume less than the $\sqrt{3}/4$ of
Remark~\ref{rem:dim4}. Section~\ref{sec:method} describes how we prove this
bound, and Section~\ref{sec:results} states it.

\subsection{Cross-polytopes and eight vertices}

\begin{definition}
  \label{def:cross}
  A $d$-polytope is a \emph{cross-polytope} if it is combinatorially
  equivalent to $\conv\{\pm e_{1}, \ldots, \pm e_{d}\}$, whose $2^{d}$ facets
  are the convex hulls of the sets containing exactly one of $e_{i}$ and
  $-e_{i}$ for each $i$. A
  cross-polytope $\conv\{\pm u_{1}, \ldots, \pm u_{d}\}$ with $u_{1}, \ldots,
  u_{d}$ an orthonormal basis is \emph{regular}; it lies in
  $\mathcal{P}_{d}(2d)$ and has volume $2^{d}/d!$, which is $2/3$ for
  $d = 4$~\artref{testeight}.
\end{definition}

\begin{theorem}[{\cite[Corollary~3]{horvath-langi-2016}}]
  \label{thm:cross}
  If $P \in \mathcal{P}_{d}(2d)$ has maximal volume in the combinatorial class
  of cross-polytopes inscribed in $\mathbb{S}^{d-1}$, then it is a regular
  cross-polytope.
\end{theorem}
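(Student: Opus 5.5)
The plan is to obtain an exact formula for the volume of any polytope in the combinatorial class of cross-polytopes and then bound it by Hadamard's inequality. Label the vertices of $P$ as $p_{1}, q_{1}, \ldots, p_{d}, q_{d}$. The labelling is chosen so that a combinatorial equivalence with $\conv\{\pm e_{1}, \ldots, \pm e_{d}\}$ sends $p_{i} \mapsto e_{i}$ and $q_{i} \mapsto -e_{i}$. The facets of $P$ are then exactly the simplices $\conv\{x_{1}, \ldots, x_{d}\}$ with $x_{i} \in \{p_{i}, q_{i}\}$. For $S \subseteq \{1, \ldots, d\}$, write $x^{S}$ for the choice with $x_{i} = q_{i}$ if $i \in S$ and $x_{i} = p_{i}$ otherwise.

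First I will express $\vol_{d}(P)$ as a sum of signed cone volumes. Fix a point $o$ in the interior of $P$; by translation invariance of the determinants below I can take $o = 0$. Since $P$ is convex and simplicial, $P$ is the union of the simplices $\conv\{0, F\}$ over its facets $F$, and these have disjoint interiors. Therefore
\[
  \vol_{d}(P) = \frac{1}{d!} \sum_{S} \bigl|\det(x^{S}_{1}, \ldots, x^{S}_{d})\bigr| .
\]
The key combinatorial point concerns the signs. The boundary complex of $P$ is isomorphic to that of the standard cross-polytope, and that is an oriented pseudomanifold. For $\conv\{\pm e_{i}\}$ one checks directly that $\det(x^{S}) = (-1)^{|S|}$. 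A consistent orientation of the boundary sphere of $P$ is induced by the combinatorial isomorphism. Moreover, all cones $\conv\{0,F\}$ are positively oriented with respect to outward normals. I will conclude from these two facts that $\operatorname{sign}\det(x^{S}) = \varepsilon (-1)^{|S|}$ for a fixed $\varepsilon \in \{\pm 1\}$. I expect this orientation bookkeeping to be the main point requiring care. I would argue it via adjacent facets: two facets sharing a ridge differ in exactly one coordinate $i$, say $p_{i}$ versus $q_{i}$. Because $0$ lies in the interior, $p_{i}$ and $q_{i}$ lie strictly on opposite sides of the hyperplane spanned by the ridge. Hence the two determinants have opposite signs. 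The facet graph (the hypercube graph) is connected, so the sign pattern follows.

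With the signs determined, multilinearity gives
\[
  \vol_{d}(P) = \frac{1}{d!} \Bigl|\sum_{S} (-1)^{|S|} \det(x^{S})\Bigr|
  = \frac{1}{d!}\,\bigl|\det(p_{1} - q_{1}, \ldots, p_{d} - q_{d})\bigr| .
\]
By Hadamard's inequality this is at most $\frac{1}{d!}\prod_{i} |p_{i} - q_{i}|$. Since all vertices lie on $\mathbb{S}^{d-1}$, each factor satisfies $|p_{i} - q_{i}| \le 2$, so $\vol_{d}(P) \le 2^{d}/d!$. This value is attained by the regular cross-polytope, so it is the maximum over the class.

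For the equality case, suppose $P$ has maximal volume in the class. Then $|p_{i} - q_{i}| = 2$ for every $i$, which forces $q_{i} = -p_{i}$. Equality in Hadamard's inequality holds only when the nonzero vectors $p_{i} - q_{i} = 2p_{i}$ are pairwise orthogonal. Therefore $p_{1}, \ldots, p_{d}$ form an orthonormal basis and $P = \conv\{\pm p_{1}, \ldots, \pm p_{d}\}$ is regular.
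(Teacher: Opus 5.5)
The paper gives no proof of this statement. It quotes it from Horv\'ath and L\'angi and uses it only as a black box in Corollary~\ref{cor:eight}, so there is no in-paper argument to match. Your proof is correct and would serve as a self-contained replacement.

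In the paper's own terms, your sign bookkeeping is Lemma~\ref{lem:volume} for the cross-polytope. Weighting the facet $(x^{S}_{1},\ldots,x^{S}_{d})$ by $(-1)^{|S|}$ is a coherent orientation: the two facets at a ridge differ in one slot only and have opposite parity of $|S|$. So you could quote that lemma instead of re-deriving it. The signed volume $D$ of \eqref{eq:signed-volume} then collapses by multilinearity to the single determinant $\det(p_{1}-q_{1},\ldots,p_{d}-q_{d})$. Hadamard's inequality together with $|p_{i}-q_{i}|\le 2$ then does for this type what the sum-of-squares certificates do for the other $36$. It bounds $|D|$ by $2^{d}$ on all of $(\mathbb{S}^{d-1})^{2d}$, with equality exactly at regular cross-polytopes, which is sharper and more explicit than the cited statement. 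Citing keeps the paper short. Your route would let the paper handle the excluded type \texttt{firsching-8-23} by the same signed-volume method, with an explicit inequality in place of a certificate.

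If you keep your own derivation of the volume formula, two steps need one more line each.
\begin{itemize}
\item Translating $o$ to $0$ moves the vertices off the sphere, and the individual determinants are not translation invariant. This is harmless only because $\sum_{S}(-1)^{|S|}\det(x^{S}_{1}-o,\ldots,x^{S}_{d}-o)$ is independent of $o$, and the sphere is used only through the differences $p_{i}-q_{i}$.
\item That $p_{i}$ and $q_{i}$ lie strictly on opposite sides of the span of the ridge does not follow from $0$ being interior alone. It follows because the cones over the two adjacent facets are non-degenerate simplices with disjoint interiors that share the face spanned by $0$ and the ridge.
\end{itemize}
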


\begin{theorem}[{\cite[Theorem~1]{grunbaum-sreedharan-1967}}]
  \label{thm:gs}
  There are exactly $37$ combinatorial types of simplicial $4$-polytopes with
  $8$ vertices.
\end{theorem}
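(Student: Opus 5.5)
The plan is to derive the count $37$ as the number of combinatorial types of simplicial $3$-spheres with $8$ vertices, $39$, minus the two types among them that are not polytopal. Completeness and irredundancy would be argued separately, as follows.

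\emph{Step 1 (enumerate spheres).} The boundary complex of a simplicial $4$-polytope with $8$ vertices is a triangulated $3$-sphere on $8$ labelled vertices. By the Dehn--Sommerville relations it has $f$-vector $(8, f_1, 2f_1-16, f_1-8)$ with $18 \le f_1 \le 28$, since the lower bound theorem gives $f_1 \ge 4\cdot 8-10 = 22$ and the upper bound $\binom{8}{2}=28$ is attained by cyclic polytopes, so in fact $22 \le f_1 \le 28$. I would generate all pure $3$-dimensional complexes on $\{1,\ldots,8\}$ with these face numbers in which:
\begin{itemize}
  \item every ridge lies in exactly two facets;
  \item every vertex link is a triangulated $2$-sphere, checked by Euler characteristic and connectivity of a $2$-manifold link.
\end{itemize}
These are the combinatorial $3$-manifolds on $8$ vertices. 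With so few vertices each is a sphere; one verifies this by checking that each is obtained from the boundary of the $4$-simplex by bistellar moves, or by computing homology. An orderly generation with canonical labelling (lexicographically minimal facet list under $S_8$) removes isomorphic copies. The expected outcome is exactly $39$ isomorphism classes, matching Barnette's enumeration.

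\emph{Step 2 (realise $37$ of them).} For each of the $37$ candidate types I would exhibit explicit vertex coordinates in $\mathbb{R}^4$, and check in exact arithmetic that every prescribed facet is a facet of the convex hull. The check is that the $4\times 4$ determinant of the facet, with all $8$ points homogenised, has constant sign over the remaining four vertices. Firsching's rational realisations on the sphere \cite{firsching-2017} supply such coordinates directly. Combinatorially distinct facet lists then give distinct types, so there are at least $37$.

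\emph{Step 3 (exclude two spheres).} This is the main obstacle: show that the two remaining spheres, the Brückner sphere and the Barnette sphere, are not polytopal. A polytopal realisation would determine the signs of many $5\times5$ determinants of the homogenised vertex matrix, that is, a partial chirotope. For each facet $F$ and each pair of vertices off $F$, both lie on the same side of $F$, so the two determinants $[F,i]$ and $[F,j]$ have equal sign. I would first try to derive a contradiction purely from these sign constraints together with the Grassmann--Plücker relations, by exhausting completions to uniform chirotopes of rank $5$ on $8$ elements. This is a finite computation, and could equally be done in the Gale dual of rank $3$ (affine Gale diagrams in the plane). If every completion fails, the sphere is not even an oriented-matroid sphere. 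If some completion survives, I would instead produce a biquadratic final polynomial, a Grassmann--Plücker identity whose terms are all forced positive, as is classically done for the Barnette sphere. Either route, checked exactly, shows the sphere is non-polytopal, leaving exactly $37$ types.
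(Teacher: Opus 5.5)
Your plan is sound, but the paper has no proof to compare it with. The statement is taken from Gr\"unbaum and Sreedharan \cite{grunbaum-sreedharan-1967}. The paper's own exact computation (Remark~\ref{rem:eighttypes}) covers only the lower-bound half: it checks that Firsching's $37$ rational polytopes \cite{firsching-2017} are simplicial and pairwise inequivalent, which is essentially your Step~2.

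Your Steps~1 and~3 supply what the citation provides, namely that no further type exists. They do so by listing the $39$ simplicial $3$-spheres on $8$ vertices and proving the Br\"uckner and Barnette spheres non-polytopal. The cited enumeration works with Gale diagrams instead. For $8 = d+4$ points the Gale dual has rank $3$, an affine Gale diagram in the plane. Since every uniform rank-$3$ oriented matroid on $8$ elements is realisable (Goodman--Pollack), enumerating diagrams gives completeness and realisability together, with no non-polytopal candidates to exclude. That route is shorter. Yours is self-contained and fully machine-checkable, at the price of a sphere enumeration with isomorphism rejection plus two infeasibility searches.

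The same realisability fact sharpens your Step~3:
\begin{enumerate}
  \item A uniform rank-$5$ chirotope on $8$ elements satisfying your facet sign conditions is acyclic. A positive circuit has $6$ elements, so it meets the cocircuit of any facet, which has constant sign off that facet, and this contradicts orthogonality.
  \item Its rank-$3$ dual is realisable, so it is realisable. A realisation, scaled onto an affine hyperplane, is a polytope with the prescribed facets.
  \item Hence with $8$ vertices polytopal and matroid polytopal coincide, and your completion search must already fail for both spheres. The final-polynomial branch never arises.
\end{enumerate}
In any case, such a certificate would have to combine several three-term relations multiplicatively. A single relation with all terms forced positive already excludes every completion.

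Some small repairs:
\begin{itemize}
  \item In Step~2 the determinants are $5\times 5$, as you say in Step~3.
  \item Also in Step~2, checking that each prescribed facet is a facet of the hull gives only one inclusion. Add that these facets form a closed $3$-pseudomanifold inside the $3$-sphere $\partial P$ and therefore exhaust it, or check directly that no other $4$-subset is a facet.
  \item In Step~1 the bound $18 \le f_1$ is a leftover.
  \item Homology alone would not certify a $3$-sphere. You do not need sphericity, though, since a non-sphere is non-polytopal anyway.
\end{itemize}
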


\begin{remark}
  \label{rem:eighttypes}
  Firsching \cite{firsching-2017} gives a polytope of each type with rational
  vertices on $\mathbb{S}^{3}$; we label them \texttt{firsching-8-0} to
  \texttt{firsching-8-36}. We check in exact arithmetic that these $37$
  polytopes are simplicial and pairwise not combinatorially equivalent, so by
  Theorem~\ref{thm:gs} they represent every type, and that
  \texttt{firsching-8-23} is a cross-polytope~\artref{testeight}.
\end{remark}

\section{Method}
\label{sec:method}

By Section~\ref{sec:prelim} it suffices to bound the volume of every
$P \in \mathcal{P}_{4}(7)$ of type $C_{4}(7)$, and of every
$P \in \mathcal{P}_{4}(8)$ of each simplicial type other than the
cross-polytope. For one type at a time, we write $d!$ times the volume
as a polynomial $D$ in the coordinates of the vertices, bound $D$ on
$(\mathbb{S}^{d-1})^{n}$ by an algebraic identity whose right-hand side is
visibly non-negative there, and check that identity in exact arithmetic.

\subsection{The signed volume}

\begin{definition}
  \label{def:signed-volume}
  An \emph{orientation} of a finite set is an ordering of its elements up to
  even permutations. Let $\mathcal{C} = (n, \mathcal{F})$, where
  $\mathcal{F}$ is a finite set of ordered $d$-tuples
  $F = (F_{1}, \ldots, F_{d})$ of distinct elements of $\{1, \ldots, n\}$,
  called facets. Deleting $F_{a}$ gives a ridge,
  with the orientation of $(F_{1}, \ldots, \widehat{F_{a}}, \ldots, F_{d})$
  if $a$ is even and the opposite one if $a$ is odd. The orientation of
  $\mathcal{C}$ is \emph{coherent} if every ridge lies in exactly two facets
  and these induce opposite orientations on it. A point
  $p = (p_{1}, \ldots, p_{n}) \in (\mathbb{R}^{d})^{n}$ \emph{realises}
  $\mathcal{C}$ if $P = \conv\{p_{1}, \ldots, p_{n}\}$ has the $p_{i}$ as its
  vertices and its facets are exactly the simplices
  $\conv\{p_{j} : j \in F\}$, $F \in \mathcal{F}$. The \emph{signed volume} of
  $p$ is
  \begin{equation}
    \label{eq:signed-volume}
    D(p) \;=\; \sum_{F \in \mathcal{F}}
      \det\bigl(p_{F_{1}}, \ldots, p_{F_{d}}\bigr).
  \end{equation}
\end{definition}

Writing $x_{i,a}$ for the $a$-th coordinate of $p_{i}$ and
$x_{i} = (x_{i,1}, \ldots, x_{i,d})$, $D$ is a polynomial of degree $d$ in the
$dn$ variables $x_{i,a}$ with integer coefficients \artref{signedvolume}.

\begin{lemma}
  \label{lem:volume}
  If the orientation of $\mathcal{C}$ is coherent and $p$ realises
  $\mathcal{C}$, then $d!\,\vol_{d}(P) = \lvert D(p)\rvert$.
\end{lemma}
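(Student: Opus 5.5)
The plan is to decompose $P$ into cones from an interior point and compare the signed sum with the volume of that decomposition. Fix a point $c$ in the interior of $P$; since $p$ realises $\mathcal{C}$, the simplices $\conv(\{c\}\cup\{p_j : j\in F\})$, $F\in\mathcal{F}$, have pairwise disjoint interiors and cover $P$. Hence
\[
d!\,\vol_d(P)=\sum_{F\in\mathcal{F}}\bigl\lvert\det(p_{F_1}-c,\ldots,p_{F_d}-c)\bigr\rvert .
\]
It then suffices to prove two things. First, the signed sum $\sum_F \det(p_{F_1}-c,\ldots,p_{F_d}-c)$ does not depend on $c$ and equals $D(p)$. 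Second, all of its terms have the same sign. Together these give $\lvert D(p)\rvert=d!\,\vol_d(P)$.

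For the independence of $c$, I would expand each determinant by multilinearity in the columns:
\[
\det(p_{F_1}-c,\ldots,p_{F_d}-c)=\det(p_{F_1},\ldots,p_{F_d})-\sum_{a=1}^{d}\det(p_{F_1},\ldots,c,\ldots,p_{F_d}),
\]
with $c$ in column $a$. The terms with two or more copies of $c$ vanish because they have repeated columns. Moving $c$ to the front of the $a$-th term costs the sign $(-1)^{a-1}$, which gives $\det(c,p_{F_1},\ldots,\widehat{p_{F_a}},\ldots,p_{F_d})$ weighted by the orientation sign that Definition~\ref{def:signed-volume} assigns to the ridge obtained by deleting $F_a$, up to one global sign convention. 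By coherence, each ridge arises from exactly two facets with opposite induced orientations, so these linear-in-$c$ terms cancel in pairs. Therefore the sum equals $D(p)$. The main bookkeeping obstacle is checking that the parity convention ($a$ even versus odd) matches $(-1)^{a-1}$ up to one global sign; any such global sign is harmless.

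For the constant sign, I would argue via adjacency. Two facets $F,G$ sharing a ridge $R$ induce opposite orientations on it. Write $\det(p_{F}-c)$ as $\pm\det(p_{R}-c,\,p_{x}-c)$, with the ridge vertices first and the remaining vertex $x$ of $F$ last. Write $\det(p_G-c)$ similarly with its remaining vertex $y$. The opposite orientations make the two signs $\pm$ opposite. Since $P$ is convex, $c$ is interior and $R$ is a ridge, the points $p_x$ and $p_y$ lie strictly on opposite sides of the hyperplane through $c$ and $\operatorname{aff}(p_R)$. Hence $\det(p_R-c,p_x-c)$ and $\det(p_R-c,p_y-c)$ have opposite signs, and the two facet terms have the same sign.

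The boundary of a simplicial $d$-polytope is connected through ridges: its facet-ridge graph is connected, as the dual graph of a polytope. So all terms share one sign and are nonzero, because $c$ is interior. This completes the plan. I expect the sign bookkeeping in the ridge cancellation to be the only delicate step.
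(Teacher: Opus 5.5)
Your proposal is correct and takes the same route as the paper. You expand by multilinearity and cancel across ridges by coherence to get $\sum_F \det(p_{F_1}-c,\ldots,p_{F_d}-c)=D(p)$ for every $c$, then use the cone decomposition from an interior point and propagate the sign along ridge-adjacent facets. The only difference is that the paper simply asserts that the boundary orientation of $P$ is coherent, whereas you check the equal-sign condition across each ridge directly, via the strict separation of $p_x$ and $p_y$ by the hyperplane through $c$ and $\mathrm{aff}(p_R)$; this is a valid way to fill in that step.
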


\begin{proof}
  Fix $y \in \mathbb{R}^{d}$ and expand
  $\det(p_{F_{1}} - y, \ldots, p_{F_{d}} - y)$ by multilinearity. Terms
  containing $y$ twice vanish. A term containing $y$ once is, up to sign, the
  determinant of $y$ and the vertices of a ridge of $F$ in their induced
  order, and by coherence the two facets at that ridge contribute it with
  opposite signs. Hence
  $D(p) = \sum_{F} \det(p_{F_{1}} - y, \ldots, p_{F_{d}} - y)$ for every $y$.
  Take $y$ in the interior of $P$. The simplices $\conv(\{y\} \cup F)$ tile
  $P$, and each summand is $\pm d!$ times the volume of one of them. The sign
  is $+$ exactly when the ordering of $F$ agrees with the orientation that $P$
  induces on its boundary. That orientation is coherent. Any two facets of
  $P$ are joined by a sequence of facets in which consecutive ones share a
  ridge, and along such a sequence a coherent orientation is determined by
  its value on the first facet. Hence the orientation of $\mathcal{C}$ agrees
  with that of the boundary on every facet or on none, and the signs are all
  $+$ or all $-$.
\end{proof}

\begin{remark}[{\cite{berman-hanes-1970}, \cite[Lemma~3.5]{hoehner-ledford-2026}}]
  For $d = 3$, Berman and Hanes state this identity without proof, and Hoehner
  and Ledford prove it with the divergence theorem.
\end{remark}

\subsection{Certificates}

To prove $D \leq \beta$ on $(\mathbb{S}^{d-1})^{n}$, we write $\beta - D$ as a
polynomial that is non-negative on $\mathbb{R}^{dn}$ plus a polynomial that
vanishes on $(\mathbb{S}^{d-1})^{n}$.

\emph{Sums of squares.} A \emph{monomial} is a product of the variables
$x_{i,a}$, repetitions allowed; the empty product $1$ is the monomial of
degree $0$. Let $v(x)$ be the column vector of all monomials of degree at
most $2$, in a fixed order,
\[
  v(x) = \bigl(1,\; x_{1,1}, \ldots, x_{n,d},\; x_{1,1}^{2},\;
  x_{1,1}x_{1,2}, \ldots, x_{n,d}^{2}\bigr)^{\mathsf{T}},
\]
which has $1 + dn + \binom{dn+1}{2}$ entries.

Every polynomial $g$ of degree at most $2$ is $c^{\mathsf{T}} v(x)$ for a
unique vector $c$, and then $g^{2} = v^{\mathsf{T}} (cc^{\mathsf{T}})\, v$.
Hence the sums of squares of such polynomials are exactly the polynomials
\[
  v^{\mathsf{T}} Q\, v = \lvert L^{\mathsf{T}} v\rvert^{2},
  \qquad Q = LL^{\mathsf{T}} \text{ symmetric positive semidefinite}.
\]
Products of two entries of $v$ have degree at most $4$, the degree of $D$.

\emph{Multiples of the sphere equations.} A sum of squares alone cannot
equal $\beta - D$. Since $D(tx) = t^{d} D(x)$, the polynomial $\beta - D$
takes negative values on $\mathbb{R}^{dn}$. The bound holds only because
\[
  \lvert x_{i}\rvert^{2} = x_{i,1}^{2} + \cdots + x_{i,d}^{2} = 1,
  \qquad i = 1, \ldots, n,
\]
so the identity has to use these equations.

For any polynomials $h_{1}, \ldots, h_{n}$, the polynomial
\[
  \sum_{i=1}^{n} h_{i}(x)\bigl(\lvert x_{i}\rvert^{2} - 1\bigr)
\]
vanishes on $(\mathbb{S}^{d-1})^{n}$.

\begin{definition}
  \label{def:certificate}
  A \emph{certificate} for $\mathcal{C}$ and $\beta \in \mathbb{Q}$ is a
  symmetric positive definite matrix $Q$ with rational entries together with
  polynomials $h_{1}, \ldots, h_{n}$ with rational coefficients such that
  \begin{equation}
    \label{eq:certificate}
    \beta - D(x) \;=\; v(x)^{\mathsf{T}} Q\, v(x)
    \;+\; \sum_{i=1}^{n} h_{i}(x)\bigl(\lvert x_{i}\rvert^{2} - 1\bigr)
  \end{equation}
  holds identically as polynomials in the $x_{i,a}$ \artref{certificate}.
\end{definition}

Because \eqref{eq:certificate} is an identity of polynomials with rational
data, it can be verified in exact arithmetic by comparing coefficients
(Section~\ref{sec:verification}).

\begin{proposition}
  \label{prop:certificate}
  Suppose the orientation of $\mathcal{C}$ is coherent and a certificate for
  $\mathcal{C}$ and $\beta$ exists. Then $\lvert D(p)\rvert \leq \beta$ for
  every $p \in (\mathbb{S}^{d-1})^{n}$, and every polytope inscribed in
  $\mathbb{S}^{d-1}$ that realises $\mathcal{C}$ has volume at most
  $\beta/d!$.
\end{proposition}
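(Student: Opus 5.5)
The plan is to evaluate the certificate identity \eqref{eq:certificate} at a point of $(\mathbb{S}^{d-1})^{n}$ to get a one-sided bound, obtain the other side from a reflection symmetry of $D$, and then apply Lemma~\ref{lem:volume}.

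First I would fix $p \in (\mathbb{S}^{d-1})^{n}$ and substitute $x = p$ into \eqref{eq:certificate}. Every factor $\lvert p_{i}\rvert^{2} - 1$ vanishes, so the sum $\sum_{i} h_{i}(p)(\lvert p_{i}\rvert^{2}-1)$ is $0$. Since $Q$ is symmetric positive definite, hence positive semidefinite, we have $v(p)^{\mathsf{T}} Q\, v(p) \geq 0$. Together these give $\beta - D(p) \geq 0$, that is, $D(p) \leq \beta$ on all of $(\mathbb{S}^{d-1})^{n}$. No convexity or realisation hypothesis is used here.

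Next I would obtain the lower bound $D(p) \geq -\beta$ from a symmetry of $D$ rather than from a second certificate. Let $R$ be an orthogonal map with $\det R = -1$, for instance the map negating the first coordinate, and write $Rp = (Rp_{1}, \ldots, Rp_{n})$. Then $Rp$ again lies in $(\mathbb{S}^{d-1})^{n}$. For each facet,
\[
  \det(Rp_{F_{1}}, \ldots, Rp_{F_{d}}) = \det R \cdot \det(p_{F_{1}}, \ldots, p_{F_{d}}),
\]
so summing \eqref{eq:signed-volume} over $F \in \mathcal{F}$ gives $D(Rp) = -D(p)$. Applying the first step to $Rp$ yields $-D(p) \leq \beta$. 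Hence $\lvert D(p)\rvert \leq \beta$.

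Finally, let $P$ be a polytope inscribed in $\mathbb{S}^{d-1}$ that realises $\mathcal{C}$ through a point $p \in (\mathbb{S}^{d-1})^{n}$. Since the orientation of $\mathcal{C}$ is coherent, Lemma~\ref{lem:volume} gives $d!\,\vol_{d}(P) = \lvert D(p)\rvert \leq \beta$, so $\vol_{d}(P) \leq \beta/d!$. Coherence is needed only in this last step.

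I do not expect a real obstacle. The one point that needs care is the lower bound, since the certificate alone bounds $D$ only from above. The reflection argument handles it because the sphere product is invariant under $R$ while $D$ changes sign.
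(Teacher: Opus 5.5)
Your proof is correct and follows the paper's argument essentially step for step. You evaluate the certificate identity at $p$ to get $D(p)\le\beta$, apply an orientation-reversing orthogonal map (the paper uses a reflection $\rho$) to get $-D(p)\le\beta$, and then invoke Lemma~\ref{lem:volume} for the volume bound.
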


\begin{proof}
  Let $p \in (\mathbb{S}^{d-1})^{n}$. In \eqref{eq:certificate} the second sum
  vanishes at $p$ and the first is non-negative, so $D(p) \leq \beta$. Let
  $\rho$ be a reflection of $\mathbb{R}^{d}$. Then
  $\rho p = (\rho p_{1}, \ldots, \rho p_{n})$ also lies in
  $(\mathbb{S}^{d-1})^{n}$, and $D(\rho p) = \det(\rho)\,D(p) = -D(p)$, so
  $-D(p) \leq \beta$ as well. Hence $\lvert D(p)\rvert \leq \beta$, and
  Lemma~\ref{lem:volume} gives the volume bound.
\end{proof}

\begin{remark}[{\cite{parrilo-2003}, \cite{putinar-1993}}]
  Identities of the form \eqref{eq:certificate} are the sum-of-squares
  certificates of Parrilo. Their existence is not automatic. If $\beta$ is
  below the maximum of $D$ on $(\mathbb{S}^{d-1})^{n}$ there is none, by
  Proposition~\ref{prop:certificate}. If $\beta$ is above it, Putinar's
  theorem gives an identity of this form with $Q$ positive semidefinite, but
  with monomials of some degree that it does not bound. We use monomials of
  degree at most $2$, the smallest degree for which $v^{\mathsf{T}} Q\, v$
  reaches the degree $4$ of $D$, and nothing guarantees a certificate of this
  degree in advance: the ones used here exist because they were found and
  checked. Positive semidefiniteness of $Q$ would suffice, but we require
  positive definiteness because, by Sylvester's criterion, it can be checked
  from the leading principal minors alone, whereas positive semidefiniteness
  needs all principal minors.
\end{remark}

\subsection{Verification}
\label{sec:verification}

A certificate is finite data: the ordered facets of $\mathcal{C}$, $\beta$,
the entries of $Q$ and the coefficients of the $h_{i}$, all rational. The
verifier checks the hypotheses of Proposition~\ref{prop:certificate} for this
data in exact rational arithmetic \artref{verifier}:
\begin{enumerate}
  \item \emph{Identity.} $D$ is rebuilt from the stored facets by
    \eqref{eq:signed-volume}, both sides of \eqref{eq:certificate} are
    expanded, and every coefficient must agree.
  \item \emph{Positive definiteness.} By Sylvester's criterion, $Q$ is
    positive definite if and only if every leading principal minor is
    positive. The minors are computed exactly by the fraction-free
    elimination of Bareiss \cite{bareiss-1968}.
  \item \emph{Orientation and type.} Every ridge must lie in exactly two facets with
    opposite induced orientations, so the orientation is coherent. The facet
    vertex sets must also be exactly those of the type: for $C_{4}(7)$ the
    fourteen facets listed in Remark~\ref{rem:cyclic}, which the verifier
    rebuilds from Gale's condition, and for eight vertices those of the
    polytope of Remark~\ref{rem:eighttypes} \artref{verifier}. Without these
    checks the bound would still hold for $D$, but need not bound the volume
    of a polytope of the type.
\end{enumerate}
The stored certificate for $C_{4}(7)$ passes all three checks and has
$\beta = 48/5$ \artref{certificates}, \artref{testmethod}; so do the $36$
certificates for eight vertices, with $\beta = 78/5$
\artref{certificateseight}.

\subsection{How the certificates were found}

\begin{remark}
  The certificates were found numerically and then made exact. The proofs use
  none of the steps below, only the checks of
  Section~\ref{sec:verification} \artref{certscript}.
\end{remark}

\begin{remark}
  \label{rem:computation}
  The entries of $Q$ and the coefficients of the $h_{i}$ enter
  \eqref{eq:certificate} linearly, and the only other condition is that $Q$
  be positive semidefinite, so finding them is a semidefinite program. We
  solved it in floating point with Clarabel \cite{goulart-chen-2026} through
  CVXPY \cite{diamond-boyd-2016}, making the smallest eigenvalue of $Q$ as
  large as possible \artref{search}.
\end{remark}

\begin{remark}
  Changing the signs of two coordinates of every point, say $x_{i,a}$ and
  $x_{i,b}$ for all $i$, leaves $D$ unchanged. We grouped the monomials by how
  they change sign under these maps and took $Q$ block diagonal, with one
  block per group \artref{basis}. This made the program much smaller; the
  verifier checks each block.
\end{remark}

\begin{remark}
  We rounded every number to a rational with denominator $10^{9}$ and added
  the small leftover of \eqref{eq:certificate} to matching entries of $Q$, so
  that \eqref{eq:certificate} holds exactly. This is a simpler form of the
  rounding of Peyrl and Parrilo \cite{peyrl-parrilo-2008} \artref{rounding}.
  The verifier checks that $Q$ is still positive definite.
\end{remark}

\begin{remark}
  For $C_{4}(7)$, any $\beta < 4! \cdot \sqrt{3}/4 \approx 10.39$ with a
  certificate proves Corollary~\ref{cor:problem2}, and none exists below the
  maximum of $D$, which is at least the value of about $9.34$ found
  numerically \artref{ascent}, \artref{certificates}, \artref{testmethod}. We chose $\beta = 48/5$, giving the simple
  bound $2/5$, and did not look for the smallest possible $\beta$. For eight
  vertices, any common $\beta < 4! \cdot 2/3 = 16$ suffices for
  Corollary~\ref{cor:eight}. We chose $\beta = 78/5$, giving the bound
  $13/20$, above the largest numerical value of $D$ among the $36$ types,
  about $4! \cdot 0.601 \approx 14.4$ \artref{sweepeight}.
\end{remark}

\section{Seven vertices}
\label{sec:results}

\begin{theorem}
  \label{thm:main}
  Every $P \in \mathcal{P}_{4}(7)$ of combinatorial type $C_{4}(7)$ has
  $\vol_{4}(P) \leq 2/5$.
\end{theorem}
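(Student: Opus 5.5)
The plan is to reduce Theorem~\ref{thm:main} to Proposition~\ref{prop:certificate}, applied to an oriented version of $C_{4}(7)$ with $\beta = 48/5$. Since $48/5 \cdot 1/4! = 2/5$, the proposition then gives the bound directly.

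\textbf{Step 1: fix the combinatorial data.} I take $\mathcal{C} = (7, \mathcal{F})$, where $\mathcal{F}$ consists of the fourteen facets listed in Remark~\ref{rem:cyclic}, each ordered so that the orientation is coherent. A concrete choice is to orient each facet $F$ by the sign of $\det\bigl(\gamma(t_{F_{2}}) - \gamma(t_{F_{1}}), \ldots\bigr)$ for points on the moment curve, that is, by the boundary orientation of an actual cyclic polytope. This orientation is automatically coherent, because every ridge of a simplicial $4$-polytope lies in exactly two facets, which induce opposite orientations. The verifier checks coherence combinatorially, and it also checks that the facet vertex sets agree with those produced by Gale's condition (Theorem~\ref{thm:gale}).

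\textbf{Step 2: produce and check a certificate for $\beta = 48/5$.} The certificate consists of a rational positive definite $Q$ indexed by the monomials of degree at most $2$ in the $28$ variables, together with rational multipliers $h_{1}, \ldots, h_{7}$ of degree at most $2$, such that \eqref{eq:certificate} holds. I would find it as described in Remarks~\ref{rem:computation} and following:
\begin{itemize}
  \item solve the semidefinite program numerically, block-diagonalised by the coordinate sign-change symmetries;
  \item round all data to rationals;
  \item absorb the residual into $Q$ so that the identity is exact.
\end{itemize}
The proof then cites the verifier. It confirms three things: the exact coefficient identity, positivity of all leading principal minors of each block of $Q$ (via Bareiss elimination), and the orientation and type checks. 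The stored certificate \artref{certificates} passes these.

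\textbf{Step 3: conclude.} Let $P \in \mathcal{P}_{4}(7)$ have type $C_{4}(7)$. Label its vertices by a combinatorial equivalence with the labelled facets; then $p = (p_{1}, \ldots, p_{7})$ realises $\mathcal{C}$. By Proposition~\ref{prop:certificate}, $4!\,\vol_{4}(P) = |D(p)| \le 48/5$, so $\vol_{4}(P) \le 2/5$.

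The main obstacle is Step 2: a certificate of degree $2$ exists at this $\beta$. Nothing guarantees such an identity in advance; it exists only because the numerical maximum of $D$, about $9.34$, lies below $9.6$ with enough slack. If the semidefinite program failed at degree $2$, I would try two things first:
\begin{itemize}
  \item raise $\beta$, since anything below $10.39$ still suffices for the corollary;
  \item failing that, enlarge the multipliers $h_{i}$, or add degree-$3$ monomials, in the symmetry-reduced blocks.
\end{itemize}
The rounding must also keep $Q$ positive definite. This is why I would maximise its smallest eigenvalue numerically before rounding.
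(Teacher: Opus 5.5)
Your proposal is correct and is essentially the paper's proof: the stored certificate for $C_{4}(7)$ with $\beta = 48/5$ passes the exact checks of Section~\ref{sec:verification}, and after relabelling the vertices of $P$, Proposition~\ref{prop:certificate} (with Lemma~\ref{lem:volume}) gives $\vol_{4}(P) \leq \beta/4! = 2/5$. The moment-curve recipe for orienting the facets, and the discussion of how to find a certificate or what to try if the search failed, are not needed for the proof, which rests only on the verifier confirming the identity, positive definiteness, coherence and the facet type for the stored data.
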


\begin{proof}
  The stored certificate for $C_{4}(7)$ passes the checks of
  Section~\ref{sec:verification} with $\beta = 48/5$
  \artref{certificates}, \artref{testresults}. After relabelling its
  vertices, $P$ realises the coherently oriented sphere of that certificate,
  so Proposition~\ref{prop:certificate} gives
  $\vol_{4}(P) \leq \beta/4! = 2/5$.
\end{proof}

\begin{corollary}
  \label{cor:problem2}
  If $P \in \mathcal{P}_{4}(7)$ has volume $v_{4}(7)$, then $P$ is not
  cyclic. This answers Problem~2 for $d = 4$.
\end{corollary}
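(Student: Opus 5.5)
The plan is to compare the volume bound of Theorem~\ref{thm:main} with the volume of an explicit non-cyclic polytope, using only results already stated. Let $P \in \mathcal{P}_{4}(7)$ have $\vol_{4}(P) = v_{4}(7)$; such $P$ exists by Remark~\ref{rem:attained}. Suppose, for contradiction, that $P$ is cyclic. Then, by Remark~\ref{rem:cyclic} (via Gale's evenness condition, Theorem~\ref{thm:gale}), $P$ has combinatorial type $C_{4}(7)$. Theorem~\ref{thm:main} therefore gives $\vol_{4}(P) \leq 2/5$, and hence $v_{4}(7) \leq 2/5$.

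For a lower bound on $v_{4}(7)$, I would use the polytope $P_{0}$ of Remark~\ref{rem:dim4}. It is the convex hull of a regular triangle in one coordinate plane together with two diameters lying in the two remaining orthogonal coordinate lines. It has seven vertices on $\mathbb{S}^{3}$ and spans $\mathbb{R}^{4}$, so $P_{0} \in \mathcal{P}_{4}(7)$. I would compute its volume directly rather than rely on the formula of Theorem~\ref{thm:hl}. The polytope is a free sum of the segment $[-e_{1},e_{1}]$, the segment $[-e_{2},e_{2}]$, and the triangle in the $(x_{3},x_{4})$-plane. The volume of a free sum of polytopes containing the origin in their relative interiors, of dimensions $k_{i}$, is $\prod k_{i}! \vol(P_{i}) / d!$. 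This gives
\[
  \frac{1!\cdot 2 \cdot 1!\cdot 2 \cdot 2!\cdot \tfrac{3\sqrt3}{4}}{24} = \frac{\sqrt3}{4}.
\]
Hence $v_{4}(7) \geq \sqrt3/4$.

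To finish, I would check that $\sqrt3/4 > 2/5$. Squaring, this is $3/16 > 4/25$, that is, $75 > 64$. So $v_{4}(7) \geq \sqrt3/4 > 2/5 \geq v_{4}(7)$, which is a contradiction. Hence a maximiser is never cyclic, and this is exactly Problem~2 for $d=4$.

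The only step that needs any care is confirming that the comparison polytope genuinely lies in $\mathcal{P}_{4}(7)$, with all seven points being vertices, and has the stated volume. This is handled by the free-sum computation above. The optional step of noting that $P_{0}$ is itself not cyclic is unnecessary for the argument.
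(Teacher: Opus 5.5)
Your proof is correct and follows the paper's own argument. You get $v_{4}(7) \geq \sqrt{3}/4$ from the polytope of Remark~\ref{rem:dim4}, note that a cyclic $P$ has type $C_{4}(7)$ by Remark~\ref{rem:cyclic}, and conclude from Theorem~\ref{thm:main} that its volume is at most $2/5 < \sqrt{3}/4$. The one difference is that you check the volume $\sqrt{3}/4$ directly with the free-sum formula rather than quoting the formula of Theorem~\ref{thm:hl}; this is a harmless, self-contained variation, not a different route.
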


\begin{proof}
  By Remark~\ref{rem:dim4}, $v_{4}(7) \geq \sqrt{3}/4 > 2/5$
  \artref{testresultssec}. By Remark~\ref{rem:cyclic} a cyclic
  $P \in \mathcal{P}_{4}(7)$ has type $C_{4}(7)$, so by
  Theorem~\ref{thm:main} it has volume at most $2/5 < v_{4}(7)$.
\end{proof}

\begin{corollary}
  \label{cor:seven}
  $v_{4}(7) = \sqrt{3}/4$.
\end{corollary}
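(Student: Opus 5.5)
We prove the two inequalities $v_{4}(7) \geq \sqrt{3}/4$ and $v_{4}(7) \leq \sqrt{3}/4$ separately. The lower bound is the explicit construction of Remark~\ref{rem:dim4}. Take the convex hull of a regular triangle inscribed in the unit circle of one coordinate $2$-plane, together with the two diameters $\pm e_{3}$ and $\pm e_{4}$. This is a point of $\mathcal{P}_{4}(7)$ with volume $\sqrt{3}/4$. The seven points are vertices, since each lies on $\mathbb{S}^{3}$, and a point of the sphere is never a convex combination of other points of the sphere. Hence $v_{4}(7) \geq \sqrt{3}/4$.

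For the upper bound, Remark~\ref{rem:attained} gives a polytope $P \in \mathcal{P}_{4}(7)$ with $\vol_{4}(P) = v_{4}(7)$. By Remark~\ref{rem:propertyz}, $P$ satisfies Property Z. We then split into two cases according to whether $P$ is cyclic.

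\emph{Case 1: $P$ is cyclic.} Corollary~\ref{cor:problem2} shows this cannot happen, because Theorem~\ref{thm:main} would give $\vol_{4}(P) \leq 2/5 < \sqrt{3}/4 \leq v_{4}(7)$.

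\emph{Case 2: $P$ is not cyclic.} Since $d = 4$ is even, the hypothesis of Theorem~\ref{thm:hl} holds. So $P$ is the convex hull of three regular simplices in mutually orthogonal subspaces. We now need the second bullet of that theorem. It applies because $P$ maximises volume over all of $\mathcal{P}_{4}(7)$, hence in particular over the non-cyclic elements satisfying Property Z, and $P$ itself is one of these. The bullet gives dimensions $1,1,2$. Remark~\ref{rem:dim4} then gives $\vol_{4}(P) = \sqrt{3}/4$.

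Combining the two cases, $v_{4}(7) = \vol_{4}(P) = \sqrt{3}/4$. The main subtlety is in Case 2: the second bullet is stated for maximisers within the restricted class, so we must check that our global maximiser belongs to that class, which we did above. A second point is that Remark~\ref{rem:attained} guarantees the maximum is attained by a genuine $7$-vertex polytope, so Theorem~\ref{thm:hl} is applicable at all. Beyond these two checks, the argument only assembles earlier results.
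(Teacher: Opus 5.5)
Your proof is correct and follows the paper's argument: you take a maximiser from Remark~\ref{rem:attained}, obtain Property~Z from Remark~\ref{rem:propertyz}, rule out the cyclic case by Corollary~\ref{cor:problem2}, and apply the even-$d$ clause of Theorem~\ref{thm:hl} together with Remark~\ref{rem:dim4}. Your separate lower bound and explicit case split only unpack what Corollary~\ref{cor:problem2} already contains, so they are redundant but harmless.
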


\begin{proof}
  Let $P \in \mathcal{P}_{4}(7)$ have volume $v_{4}(7)$; such $P$ exist by
  Remark~\ref{rem:attained}. By Remark~\ref{rem:propertyz}, $P$ satisfies
  Property~Z, and by Corollary~\ref{cor:problem2} it is not cyclic. Hence $P$
  has maximal volume over the non-cyclic elements of $\mathcal{P}_{4}(7)$
  satisfying Property~Z, and Theorem~\ref{thm:hl} with
  Remark~\ref{rem:dim4} gives $\vol_{4}(P) = \sqrt{3}/4$.
\end{proof}

\begin{remark}
  The proof of Theorem~\ref{thm:main} is computer-assisted: it relies on the
  exact verifier \artref{verifier} and on the stored certificate, not on any
  floating-point computation. We do not claim that $2/5$ is sharp; the
  largest volume of type $C_{4}(7)$ found numerically is about $0.3891$
  \artref{ascent}, \artref{testresultssec}. Problem~2 remains open for even
  $d \geq 6$.
\end{remark}

\section{Eight vertices}
\label{sec:eight}

\begin{theorem}
  \label{thm:eight}
  Every simplicial $P \in \mathcal{P}_{4}(8)$ that is not a cross-polytope has
  $\vol_{4}(P) \leq 13/20$.
\end{theorem}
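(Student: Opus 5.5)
The plan mirrors the proof of Theorem~\ref{thm:main}, applied once per combinatorial type. Let $P \in \mathcal{P}_{4}(8)$ be simplicial and not a cross-polytope. By Theorem~\ref{thm:gs} and Remark~\ref{rem:eighttypes}, $P$ is combinatorially equivalent to exactly one of the $37$ Firsching polytopes \texttt{firsching-8-0}, \ldots, \texttt{firsching-8-36}. It is not \texttt{firsching-8-23}, since that type is the cross-polytope and combinatorial equivalence is transitive. So $P$ has one of the remaining $36$ types. After relabelling its vertices by the bijection witnessing the equivalence, $P$ realises the facet structure $\mathcal{C} = (8, \mathcal{F})$ stored for that type, with $\mathcal{F}$ the facet vertex sets of the Firsching polytope, each given an ordering.

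For each of the $36$ types I would invoke the stored certificate with $\beta = 78/5$ \artref{certificateseight} and check that it passes the three verifications of Section~\ref{sec:verification}.
\begin{enumerate}
  \item The identity \eqref{eq:certificate} holds coefficientwise, with $D$ rebuilt from the stored ordered facets via \eqref{eq:signed-volume}.
  \item Every block of $Q$ is positive definite, checked by Sylvester's criterion with Bareiss elimination.
  \item The orientation is coherent, and the facet vertex sets coincide with those of the corresponding Firsching polytope.
\end{enumerate}
Once these checks pass, the hypotheses of Proposition~\ref{prop:certificate} hold. Since $P$ is inscribed in $\mathbb{S}^{3}$ and realises $\mathcal{C}$, the proposition gives $\vol_{4}(P) \le \beta/4! = (78/5)/24 = 13/20$.

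One point needs a short argument: the relabelled $P$ must realise $\mathcal{C}$ in the sense of Definition~\ref{def:signed-volume}. Its vertices are the $p_{i}$, since $P$ has $8$ vertices. Its facets are simplices because $P$ is simplicial, and combinatorial equivalence carries the facet vertex sets onto those in $\mathcal{F}$. The orderings inside each facet do not matter here: Lemma~\ref{lem:volume} only needs coherence of the stored orientation, and that is checked.

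The main obstacle is not logical but computational. A degree-$2$ certificate has to exist for every one of the $36$ types at the single common value $\beta = 78/5$. This value sits only about $8\%$ above the largest numerical maximum of $D$, roughly $14.4$. Finding the certificates means solving a semidefinite program in $32$ variables for each type, with a Gram matrix of size $1 + 32 + \binom{33}{2} = 561$, before block-diagonalisation by the sign symmetries. After that, each certificate must be rounded to rationals while keeping $Q$ strictly positive definite. If some type fails at $78/5$, I would first raise $\beta$ for that type alone, since any $\beta < 16$ still suffices for the comparison with the cross-polytope. Failing that, I would enlarge the monomial basis to degree $3$ for the offending type.
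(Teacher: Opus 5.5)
Your proof is correct and is essentially the paper's: you reduce to one of the $36$ Firsching types other than \texttt{firsching-8-23}, relabel so that $P$ realises the stored coherently oriented complex, and apply Proposition~\ref{prop:certificate} with the verified certificates at $\beta = 78/5$. Your extra check that the relabelled $P$ realises $\mathcal{C}$ spells out what the paper states in one line, and your fallback of raising $\beta$ for a single type is not needed; it would in any case only give Corollary~\ref{cor:eight}, not the stated bound $13/20$.
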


\begin{proof}
  By Remark~\ref{rem:eighttypes}, $P$ has the type of one of the $36$
  polytopes \texttt{firsching-8-$k$} with $k \neq 23$. For each of them the
  stored certificate passes the checks of Section~\ref{sec:verification} with
  $\beta = 78/5$ \artref{certificateseight}, \artref{testeight}. After
  relabelling its vertices, $P$ realises the coherently oriented sphere of
  that certificate, so Proposition~\ref{prop:certificate} gives
  $\vol_{4}(P) \leq \beta/4! = 13/20$.
\end{proof}

\begin{corollary}
  \label{cor:eight}
  $v_{4}(8) = 2/3$, and every $P \in \mathcal{P}_{4}(8)$ with volume
  $v_{4}(8)$ is a regular cross-polytope.
\end{corollary}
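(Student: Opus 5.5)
The plan is to combine existence of a maximiser with the structural results of Section~\ref{sec:prelim} and the bound of Theorem~\ref{thm:eight}. First, the lower bound: the regular cross-polytope $\conv\{\pm e_{1}, \ldots, \pm e_{4}\}$ lies in $\mathcal{P}_{4}(8)$ and has volume $2^{4}/4! = 2/3$ by Definition~\ref{def:cross}, so $v_{4}(8) \geq 2/3$.

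Next, let $P \in \mathcal{P}_{4}(8)$ have volume $v_{4}(8)$; such $P$ exists by Remark~\ref{rem:attained}. By Remark~\ref{rem:propertyz}, $P$ satisfies Property~Z, and by Lemma~\ref{lem:simplicial}, $P$ is simplicial. If $P$ were not a cross-polytope, Theorem~\ref{thm:eight} would give $\vol_{4}(P) \leq 13/20 < 2/3 \leq v_{4}(8)$, a contradiction. Hence $P$ is a cross-polytope, so $P$ lies in the combinatorial class of cross-polytopes inscribed in $\mathbb{S}^{3}$.

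Within that class, $P$ has the largest possible volume. Indeed, every member of the class lies in $\mathcal{P}_{4}(8)$ and so has volume at most $v_{4}(8) = \vol_{4}(P)$. Theorem~\ref{thm:cross} then shows that $P$ is a regular cross-polytope. Its volume is therefore $2/3$, which gives $v_{4}(8) = 2/3$. Uniqueness up to orthogonal transformations follows because any two orthonormal bases are related by an orthogonal map.

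The only delicate point is confirming that Theorem~\ref{thm:cross} applies to $P$. This needs $P$ to be maximal within the cross-polytope class specifically, not merely maximal over all of $\mathcal{P}_{4}(8)$. That holds because the class is a subset of $\mathcal{P}_{4}(8)$, so maximality over the larger family implies maximality over the class. Everything else is a direct chaining of earlier results, so the argument reduces to that bookkeeping together with the strict inequality $13/20 < 2/3$.
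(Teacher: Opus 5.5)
Your proof is correct and follows the paper's argument essentially step for step. It gets $v_{4}(8) \geq 2/3$ from the regular cross-polytope and takes a maximiser from Remark~\ref{rem:attained}. It uses Property~Z and Lemma~\ref{lem:simplicial} for simpliciality, then Theorem~\ref{thm:eight} to force the cross-polytope type, and finally applies Theorem~\ref{thm:cross} after noting that maximality over $\mathcal{P}_{4}(8)$ implies maximality within the cross-polytope class, exactly as the paper does.
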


\begin{proof}
  By Definition~\ref{def:cross}, $v_{4}(8) \geq 2/3 > 13/20$. Let
  $P \in \mathcal{P}_{4}(8)$ have volume $v_{4}(8)$; such $P$ exist by
  Remark~\ref{rem:attained}. By Remark~\ref{rem:propertyz} and
  Lemma~\ref{lem:simplicial}, $P$ is simplicial, so by
  Theorem~\ref{thm:eight} it is a cross-polytope. It has maximal volume among
  all of $\mathcal{P}_{4}(8)$, hence among the cross-polytopes in it, and
  Theorem~\ref{thm:cross} shows that it is regular, of volume $2/3$.
\end{proof}

\begin{remark}
  As for Theorem~\ref{thm:main}, the proof relies on the exact verifier
  \artref{verifier} and the stored certificates \artref{certificateseight}. We do not claim that $13/20$ is sharp; among the
  $36$ types, the largest value of $D/4!$ found numerically is about $0.601$,
  for \texttt{firsching-8-22} \artref{sweepeight}.
\end{remark}

\section*{AI assistance}

An AI assistant was used during this work, including for mathematical
discussion, implementation of the code, drafting and revising the text, and
checking the cited statements against their sources. The approach is the
author's, and the author checked the arguments and the references. The proofs
do not depend on the assistant: they rest on the cited results and on the
certificates, which the exact verifier checks.

\section*{Code and data}
\label{sec:artifacts}

{\footnotesize
Everything reported here is available at \repohref. The entries below are cited
as [R$k$]; they are artifacts rather than literature and are numbered separately
from the bibliography.

\begin{artifactlist}
  \artifact{certificates}{results/certificates/d4-n07}{the exact certificates;
    the one used here is \texttt{cyclic-4-7}, the others are for three of the
    non-cyclic types; \texttt{cyclic-4-7.relaxation.json} stores the numerical
    maximum of $D$ for $C_{4}(7)$; tests in
    \artpath{tests/sos/test\_results.py}}
  \artifact{signedvolume}{src/inscribed/certify/polynomial.py}{the signed volume
    as an exact polynomial; tests in \artpath{tests/sos/test\_polynomial.py}}
  \artifact{certificate}{src/inscribed/certify/certificate.py}{the stored data of a
    certificate; tests in \artpath{tests/sos/test\_certificate.py}}
  \artifact{verifier}{src/inscribed/certify/verify.py}{the exact verifier of
    Section~\ref{sec:verification}, with
    \artpath{src/inscribed/combinatorics/complex.py} checking coherence,
    \artpath{src/inscribed/certify/cyclic.py} rebuilding the facets of
    $C_{4}(7)$ from Gale's condition, and \artpath{src/inscribed/sos/types.py}
    with \artpath{src/inscribed/enumeration/firsching.py} those of the
    polytopes of Remark~\ref{rem:eighttypes}, from Firsching's files fetched by
    \artpath{scripts/setup\_data.py}; tests in
    \artpath{tests/sos/test\_verify.py}}
  \artifact{testresults}{tests/sos/test\_results.py}{tests re-verifying the
    committed certificates and identifying the certified type with $C_{4}(7)$}
  \artifact{certscript}{scripts/sos\_certificate.py}{searches, rounds, verifies
    and stores a certificate, and re-verifies a stored one}
  \artifact{basis}{src/inscribed/sos/basis.py}{the parity blocks; tests in
    \artpath{tests/sos/test\_basis.py}}
  \artifact{search}{src/inscribed/sos/search.py}{the semidefinite program of
    Remark~\ref{rem:computation}; tests in \artpath{tests/sos/test\_pipeline.py}}
  \artifact{rounding}{src/inscribed/sos/rounding.py}{rounding and repair; tests
    in \artpath{tests/sos/test\_rounding.py}}
  \artifact{ascent}{src/inscribed/search/ascent.py}{block-coordinate ascent on
    $D$, run by \artpath{scripts/sos\_bound.py}; tests in
    \artpath{tests/test\_search.py}}
  \artifact{testprelim}{tests/papers/maxvol\_d4/test\_preliminaries.py}{tests
    backing the computations of Section~\ref{sec:prelim}}
  \artifact{testmethod}{tests/papers/maxvol\_d4/test\_method.py}{tests backing the
    claims of Section~\ref{sec:method}}
  \artifact{testresultssec}{tests/papers/maxvol\_d4/test\_seven.py}{tests
    backing the numbers of Section~\ref{sec:results}}
  \artifact{certificateseight}{results/certificates/d4-n08}{the exact
    certificates for the $36$ types of Theorem~\ref{thm:eight}}
  \artifact{sweepeight}{results/d4-n08.json}{the numerical maxima of $D/4!$
    for the $37$ types with eight vertices}
  \artifact{testeight}{tests/papers/maxvol\_d4/test\_eight.py}{tests backing
    Remark~\ref{rem:eighttypes}, Definition~\ref{def:cross} and
    Section~\ref{sec:eight}, including the exact facet and equivalence checks}
\end{artifactlist}
}

\bibliographystyle{amsplain}
\bibliography{references}

\end{document}